\documentclass{ifacconf}

\usepackage{float} 
\usepackage{graphicx}      
\usepackage{natbib}        
\usepackage{amsmath}       
\usepackage{amssymb}
\usepackage{booktabs}
\usepackage{multirow}
\usepackage{subcaption}
\usepackage{verbatim}
\usepackage{varioref}
\usepackage{diagbox}
\usepackage{hhline}
\usepackage{comment}
\usepackage{afterpage}

\usepackage{caption}
\usepackage{tikz}
\usepackage{calc} 
\usepackage{siunitx}
\usepackage{bm}
\usepackage{makecell}

\newtheorem{theorem}{Theorem}

\makeatletter
\newcommand{\CaptionWidthFallback}{%
  \ifdim\linewidth>0pt \linewidth \else \textwidth \fi
}
\makeatother

\begin{document}
\begin{frontmatter}

\title{Verifiable Model-Free Safety Filters via Reinforcement Learning}


\author[1]{Bihui Yin} 
\author[1]{Yiwen Lu} 
\author[2]{Yuchen Jiang}
\author[1]{Yilin Mo}

\address[1]{Department of Automation and BNRist, Tsinghua University (e-mail: yinbh25@mails.tsinghua.edu.cn, luyw20@tsinghua.org.cn, ylmo@tsinghua.edu.cn)}
\address[2]{Control and Simulation Center, National Key Laboratory of Complex System Control and Intelligent Agent Cooperation, Harbin Institute of Technology (e-mail: yc.jiang@hit.edu.cn)}

\begin{abstract}

    This paper presents a reinforcement learning approach of a model-free safety filter, 
    drawing inspiration from the framework of model-based Predictive Safety Filters (PSFs). 
    Similar to conventional PSFs, our method adopts a Quadratic Programming (QP) formulation 
    by representing the filter as an unrolled QP solver network. 
    However, unlike existing PSFs that derive QP parameters explicitly from system models, 
    we learn these parameters directly through Deep Reinforcement Learning (DRL), 
    thereby eliminating the dependency on accurate system identification. 
    Furthermore, compared to traditional neural network-based methods, 
    this QP structure allows us to furnish a formal certificate for the persistent safety of the learned filter. 
    Numerical results demonstrate that our method outperforms 
    both conventional model-based PSFs and RL-trained Multi-Layer Perceptron (MLP) baselines 
    in terms of safety guarantees, minimal intervention, and per-step computational load.     

 \end{abstract}

\begin{keyword}
Safety filter, Reinforcement learning, Quadratic Programming, Safe learning-based control, Control of constrained systems
\end{keyword}

\end{frontmatter}

\section{Introduction}
The rapid advancement of modern control systems has expanded their applications into numerous domains, 
including industrial automation, transportation, and healthcare. 
However, ensuring safety in these systems remains a fundamental challenge \citep{Ames2017}, 
as inaccurate system models, environmental uncertainties, and aggressive control policies can lead to 
constraint violations that may cause catastrophic failures. 

Safety filters \citep{Hsu2023} have emerged as a modular solution to this challenge, 
modifying unsafe control inputs to ensure persistent constraint satisfaction with minimal intervention. 
As shown in Fig. \ref{fig:safety_filter}, 
the safety filter is placed between the controller and the physical system. 
Its role is to safeguard the system against the proposed control action $\hat{u}$, 
which may originate from a controller that is untrusted, aggressive, or whose safety characteristics are unverified. 
Applying such an input directly could lead to constraint violations. 
To prevent this, the filter first evaluates the safety of
proposed control action $\hat{u}$ based on current state $x$.  
When $\hat{u}$ is safe, the filter passes it through unmodified ($u = \hat{u}$);
otherwise, it computes a minimally modified safe alternative $u$ that satisfies all constraints. 
With the safety filter, we turn the constrained dynamical system into an unconstrained safe system \citep{Wabersich2021}. 

\begin{figure}[h]
   \begin{center}
   \includegraphics[width=8.4cm]{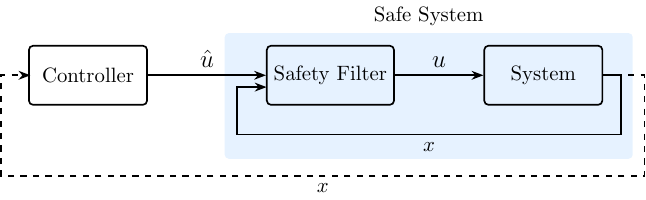}    
   \caption{Safety filter integrated in control architecture, 
   situated between an open-loop/closed-loop controller and the real system to enforce safety. 
   It validates the proposed input $\hat{u}$ and computes a minimally invasive correction $u$ to guarantee constraint satisfaction.}
   \label{fig:safety_filter}
   \end{center}
\end{figure}

Several works have developed model-based safety filters that require accurate knowledge of system dynamics.
\textit{Hamilton-Jacobi (HJ) reachability analysis} provide formal safety guarantees by computing reachable sets through partial differential equations \citep{Margellos2011}.
While effective for low-dimensional systems, 
HJ methods exhibit exponential computational complexity in high dimensions \citep{Herbert2021}, 
which persists even with GPU acceleration \citep{Long2024}.
\textit{Control Barrier Functions (CBFs)}\citep{Wieland2007, Oswin2024, Mestres2025} and their extensions \citep{Agrawal2017, Cosner2022} 
convert safety constraints into forward invariance problems, avoiding explicit trajectory computation.
However, constructing valid CBFs requires significant manual effort and restrictive assumptions. 

Another popular model-based alternative is \textit{Predictive Safety Filters (PSFs)}, 
which formulate safety enforcement as real-time optimization problems similar to Model Predictive Control (MPC) \citep{Wabersich2018, Wabersich2021}. 
Though effective, PSFs require solving optimization problems at each time step through iterative numerical methods, 
resulting in substantial and unpredictable computational load that hinder real-time deployment \citep{Viljoen2024}.
While these model-based approaches can provide formal safety guarantees, 
they face practical limitations including dependency on accurate system models and computational complexity.

In contrast, \textit{data-driven safety filters} avoid reliance on explicit system models by learning from data \citep{Jason2025, Tang2024}.
Among these, some efforts use Deep Reinforcement Learning (DRL) 
to approximate Hamilton-Jacobi-Bellman solutions \citep{Bansal2021} 
or learn safety certificates \citep{Lavanakul2024}. 
Other lines of work focus on learning Control Barrier Functions (CBFs), 
either from demonstrations \citep{Robey2020} or sampled state data \citep{Dawson2022}. 
Meanwhile, shielding methods combine learned controllers with model-based monitors \citep{Bastani2021}, 
but inherit the computational cost of online verification. 
Although effective in many settings, these data-driven methods typically entail trade-offs 
between formal safety-verification capability and computational efficiency \citep{Hsu2023}.

To integrate the complementary strengths of model-based structure and data-driven adaptation,
we propose a QP-structured safety filter and employ DRL to learn its parameters. 
Inspired by a recent paradigm that frames MPC controllers as learnable QP problems \citep{Lu2023}, 
we parameterize the PSF as an unrolled QP solver network. 
Crucially, instead of deriving the QP parameters from an explicit system model, 
we treat them as learnable parameters and optimize them directly via DRL. 
This innovative approach allows for end-to-end training of a filter 
that is computationally efficient and model-free, 
while preserving the interpretable and verifiable structure inherent to conventional PSFs. 

\textit{Contributions:} This work presents an approach of constructing a model-free safety filter 
by parameterizing the filter as an unrolled QP solver network with parameters learned via DRL. 
This approach eliminates the model dependency and computational cost 
while preserving an interpretable QP structure that enables formal verification. 
Specifically, we provide a safety certificate for the learned filter under linear system dynamics. 
Furthermore, extensive benchmark experiments are conducted, which confirmed 
the superior performance and computational efficiency of our method 
compared to both conventional PSFs and DRL-trained Multi-Layer Perceptron (MLP) baselines. 

The rest of the paper is organized as follows: 
Section 2 formulates the PSF problem for nominal linear systems and establishes its QP representation. 
Section 3 details our learning-based approach, including the unrolled QP architecture, 
the reinforcement learning framework for parameter optimization, and the design of reward function tailored for safety filtering.
Section 4 provides theoretical analysis, establishing a formal certificate for the persistent safety of the learned filter.
Section 5 reports experimental results on stabilization and tracking tasks across benchmark systems, 
with comparative analysis against baseline methods. 
Conclusions and future work are discussed in the final section.

\textit{Notations:}
$\mathbb{R}^n$ denotes the $n$-dimensional Euclidean space. 
$\mathbb{S}_{+}^n$ and $\mathbb{S}_{++}^n$ denote the sets of symmetric positive semi-definite and definite $n \times n$ matrices, respectively.
Subscripts indicate time indices (e.g., $x_k$ denotes system state at step $k$).
The Kronecker product is denoted by $\otimes$, and $\text{diag}(A_1,\dots,A_n)$ represents a block diagonal matrix. 
The set of integers in the interval $[a,b] \subset \mathbb{R}$ is $\mathcal{I}_{[a,b]}$.

\section{Problem Formulation and Preliminaries}
\label{sec:problem_formulation}
This section formulates the conventional Predictive Safety Filter (PSF) problem 
and establishes its quadratic programming (QP) representation. 
We begin by presenting the optimization-based formulation of PSF for nominal linear systems, 
then detail the transformation of this constrained optimization problem into a standard QP form. 
This formulation lays the foundation for the learning-based approach developed in the subsequent section.

\subsection{PSF for Nominal Linear Systems}
\label{subsec:psf_linear}
The Predictive Safety Filter (PSF) acts as a supervisory module that modifies an arbitrary (and potentially unsafe) 
reference control input $\hat{u}$ to ensure system safety. 
For a discrete-time linear time-invariant (LTI) system, at each time step, 
given the current state $x_0$ and reference input $\hat{u}$, 
the PSF solves the following optimization problem:
\begin{subequations}
\label{eq:psf_problem}
\begin{align}
      \min_{u_{0:N-1}} \quad & \| \hat{u} - u_0 \|^2 \label{eq:psf_obj}, \\
      \text{s.t.} \quad & x_{k+1} = A x_k + B u_k, \quad \forall k \in \mathcal{I}_{[0, N-1]} \label{eq:psf_dynamics}, \\
      & S_X \, x_{k+1} \ge d_X, \quad \forall k \in \mathcal{I}_{[0, N-1]} \label{eq:x_polytope_constraints}, \\
      & S_U \, u_k \ge d_U, \quad \forall k \in \mathcal{I}_{[0, N-1]} \label{eq:u_polytope_constraints}, \\
      & x_N \in \mathcal{X}_f \label{eq:psf_terminal_set},
\end{align}
\end{subequations}

where $N$ is the prediction horizon. The components of this optimization problem are defined as follows:

- \textit{Objective function} \eqref{eq:psf_obj}: Minimizes the squared deviation 
between the reference input $\hat{u}$ and the filtered control $u_0$, ensuring minimal intervention. 

- \textit{System dynamics} \eqref{eq:psf_dynamics}: Enforces the discrete-time linear time-invariant 
system dynamics throughout the prediction horizon, 
where $x_k \in \mathbb{R}^{n_{\text{sys}}}$ is the state vector, 
$u_k \in \mathbb{R}^{m_{\text{sys}}}$ is the control input, 
and $A \in \mathbb{R}^{n_{\text{sys}} \times n_{\text{sys}}}$, 
$B \in \mathbb{R}^{n_{\text{sys}} \times m_{\text{sys}}}$ are system matrices. 

- \textit{State safe constraints} \eqref{eq:x_polytope_constraints} 
and \textit{Input safe constraints} \eqref{eq:u_polytope_constraints}: 
Ensures all future states and control inputs remain within the safe polytopes. 
$S_X \in \mathbb{R}^{m_X \times n_{\text{sys}}}$, $d_X \in \mathbb{R}^{m_X}$, 
and $S_U \in \mathbb{R}^{m_U \times m_{\text{sys}}}$ and $d_U \in \mathbb{R}^{m_U}$ 
define the linear inequalities. 

- \textit{Terminal constraint} \eqref{eq:psf_terminal_set}: 
Requires the final predicted state $x_N$ to lie within the terminal safe set $\mathcal{X}_f$, 
which ensures persistent feasibility and stability guarantees. 
We assume the terminal set is a polyhedron 
$\mathcal{X}_f = \{x \in \mathbb{R}^{n_{\text{sys}}} \mid F x \geq g\}$, 
where $F \in \mathbb{R}^{m_F \times n_{\text{sys}}}$ and $g \in \mathbb{R}^{m_F}$. 
This polyhedral representation facilitates subsequent transformation and computation 
within the optimization framework. 

The first control input $u_0^*$ of the optimal solution will be applied to the system. 
This formulation ensures safety by enforcing state and input constraints over the prediction horizon 
while minimally modifying $\hat{u}$, with the terminal set constraint providing long-term safety guarantees.

\subsection{Standard QP Representation}
\label{subsec:qp_representation}
We can transform the original PSF problem \eqref{eq:psf_problem} 
into a standard QP formulation: 
\begin{subequations}
\label{eq:standard_qp}
\begin{align}
    \min_{y} \quad & \tfrac{1}{2}y^\top P y + q^\top y,\\
    \text{s.t.} \quad & H y + b \ge 0,
\end{align}
\end{subequations}
where $y = [u_0^\top,\; u_1^\top,\; \dots,\; u_{N-1}^\top]^\top \in\mathbb{R}^{n_{\text{qp}}}$, 
$P\in\mathbb{S}_+^{n_{\text{qp}}}$, 
$q\in\mathbb{R}^{n_{\text{qp}}}$, $H\in\mathbb{R}^{m_{\text{qp}}\times n_{\text{qp}}}$, 
$b\in\mathbb{R}^{m_{\text{qp}}}$, 
and the problem dimensions are given by:
\begin{subequations}
\label{eq:qp_dimensions}
\begin{align}
    n_{\text{qp}} &= N m_{\text{sys}} \label{eq:qp_dim_n}, \\
    m_{\text{qp}} &= N (m_X + m_U) + m_F \label{eq:qp_dim_m},
\end{align}
\end{subequations}
where $m_X$ is the number of rows in $S_X$, $m_U$ is the number of rows in $S_U$, 
and $m_F$ is the number of terminal constraints.

A common construction produces:
\begin{align}
    P &= \mathrm{diag}\big(I_{m_{\text{sys}}}, \varepsilon I_{(N-1)m_{\text{sys}}}\big),\label{eq:P_ridge}\\
    q &= -E_1^\top \hat{u},\label{eq:q_definition}
\end{align}
where $E_1=[I_{m_{\text{sys}}}\; \mathbf{0}]$ selects $u_0$, 
and we add a small ridge $\varepsilon>0$ in \eqref{eq:P_ridge} 
for numerical stability (this makes $P\in\mathbb{S}_{++}$). 

The inequality constraints encode state and input box constraints 
over the horizon and the terminal constraint. 
According to system models and given safety constraints, 
the QP parameters $(H, b)$ can be constructed as:
\begin{subequations}
\label{eq:qp_parameters}
\begin{align}
    H &= \begin{bmatrix}
        (I_N \otimes S_X) \mathcal{B} \\
        (I_N \otimes S_U) \\
        F \mathcal{B}_N
    \end{bmatrix}, \label{eq:qp_param_H} \\
    b &= \begin{bmatrix}
        - \mathbf{1}_N \otimes d_X + (I_N \otimes S_X) \mathcal{A} x_0 \\
        - \mathbf{1}_N \otimes d_U \\
        g - F \mathcal{A}_N x_0
    \end{bmatrix}, \label{eq:qp_param_b}
\end{align}
\end{subequations}

with the auxiliary matrices and vectors:
\begin{equation}
\label{eq:qp_aux}
\begin{aligned}
    \mathcal{A} &= \begin{bmatrix} A \\ A^2 \\ \vdots \\ A^N \end{bmatrix}, \hspace{1em}
    \mathcal{B} = \begin{bmatrix} 
        B &  &  \\
        AB & B &  \\
        \vdots & \ddots & \ddots \\
        A^{N-1}B & \cdots & AB & B 
    \end{bmatrix}, \\
    \mathcal{A}_N &= \text{last } n_{\text{sys}} \text{ rows of } \mathcal{A}, 
    \mathcal{B}_N = \text{last } n_{\text{sys}} \text{ rows of } \mathcal{B}.
\end{aligned}
\end{equation}

Although the standard QP representation enables the use of off-the-shelf solvers, 
it inherently relies on accurate system models $(A, B)$ 
and incurs substantial computational overhead due to iterative online optimization.
These limitations motivate the learning-based approach developed in the next section, 
where we parameterize the QP solver as a neural network and learn its parameters through reinforcement learning.

\section{Learning Model-Free Safety Filters}
\label{sec:proposed_method}
This section details the proposed learning-based approach for our model-free safety filters.  
First, we introduce the safety filter network architecture and its learnable parameters. 
Then, the reinforcement learning framework for training the filter policy will be presented.

\subsection{Network Architecture and Learnable Parameters}
\label{subsec:network_architecture}

According to \ref{subsec:qp_representation}, for the QP parameters $(P, q, H, b)$,  
$P$ and $q$ are known under the given control input $\hat{u}$, 
whereas $H$ and $b$ rely heavily on system models. 
Instead of explicitly constructing $H$ and $b$ from system dynamics, 
which may be inaccurate or unavailable, 
we treat them as \textit{learnable} parameters within our network architecture.
Especially, \eqref{eq:qp_param_H} reveals that matrix $H$ remains \textit{invariant} 
across different initial states and reference control inputs $(x_0, \hat{u})$, 
i.e., only a single matrix $H$ need to be learned for a specific system.
For vector $b$, \eqref{eq:qp_param_b} motivates its parameterization as a 
\textit{state-dependent affine transformation}:
\begin{equation}
b(x_0; W_b, b_b) = W_b x_0 + b_b, 
\label{eq:b_affine_transform}
\end{equation}
where $W_b$ and $b_b$ are learnable matrix and vector of proper dimensions.

Therefore, the original QP problem \eqref{eq:standard_qp} is transformed into a parameterized form: 
\begin{subequations}
\label{eq:parameterized_qp}
\begin{align}
    \min_{y} \quad & \tfrac{1}{2}y^\top P y + q^\top y,\\
    \text{s.t.} \quad & H y + W_b x_0 + b_b \ge 0,
\end{align}
\end{subequations}
where $y\in\mathbb{R}^{n_{\text{qp}}}$ is the optimization variable, 
$P\in\mathbb{S}_+^{n_{\text{qp}}}$ and $q\in\mathbb{R}^{n_{\text{qp}}}$ are known constants from \eqref{eq:P_ridge} and \eqref{eq:q_definition}, 
while $H\in\mathbb{R}^{m_{\text{qp}}\times n_{\text{qp}}}$, 
$W_b \in \mathbb{R}^{m_{\text{qp}} \times n_{\text{sys}}}$ and $b_b \in \mathbb{R}^{m_{\text{qp}}}$ are learnable parameters.  

To compute solutions and backpropagate gradients for the learned QP problem \eqref{eq:parameterized_qp}, 
we adopt the unrolling technique \citep{Monga2021} by executing a fixed number($n_{\text{iter}}$) of QP solving iterations  
and differentiates through the entire computational path, treating the solver as a recurrent computational graph.
This avoids the need for implicit differentiation, which requires solving to optimality before differentiation.

Specifically, we employ the Primal-Dual Hybrid Gradient (PDHG) algorithm \citep{Chambolle2011} 
as the underlying QP solver, whose iterations can be expressed as:
\begin{equation}
\label{eq:pdhg_iterations}
\begin{aligned}
z^{i+1} &= \Pi_{\mathbb{R}_+^{m_{\text{qp}}}} \left( (I - 2\alpha F) z^i + \alpha (I - 2F)\lambda^i - 2\alpha\mu \right), \\
\lambda^{i+1} &= F\left(z^i + \lambda^i\right) + \mu,
\end{aligned}
\end{equation}
where $z = H y + b$ is the primal variable, $\lambda$ is the dual variable, 
$\alpha > 0$ is the step size, and $F = (I + H P^{-1} H^\top)^{-1}$, 
$\mu = F(H P^{-1} q - b)$ are parameters derived from the QP problem. 
The core operations of this PDHG iterations 
lies in decomposing each iteration into linear affine operations, whose parameters $(F, \mu)$ depend on the QP parameters $(P, q, H, b)$,
and metric projections applied to optimization variables which is equivalent to the ReLU activation in neural networks \citep{Lu2023}.

In our \textit{unrolled QP solver}, 
gradients are computed directly from the intermediate solution after exactly $n_{\text{iter}}$ iterations, 
maintaining validity even before convergence. 
Empirically, a small number of iterations (e.g., $n_{\text{iter}} = 10$) suffices 
to achieve satisfactory safety filter performance while significantly reducing the computational burden of unrolling. 
This efficiency stems from our model-free formulation, 
which enables learning a QP problem that is not only performant but also easy to solve.

Fig. \ref{fig:network_architecture} illustrates our proposed safety filter network architecture, 
which replaces the online QP solver with a parameterized unrolled QP network that mimics QP solving iterations.
Within this structure, the filter policy——which maps initial states $x_0$ and given control input $\hat{u}$ 
to safe control actions $u_0$ is implemented through a fixed-count iterative solver,
whose parameters can subsequently be optimized using DRL methodologies.
\begin{figure}[h]
   \begin{center}
  \includegraphics[width=8.4cm]{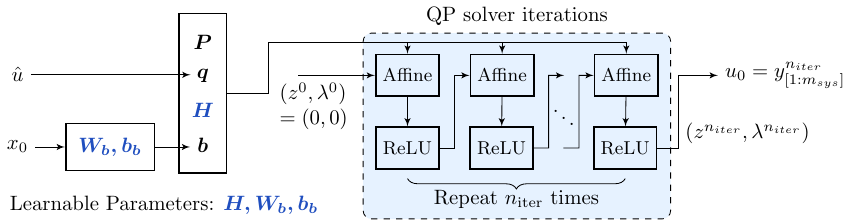} 
  \caption{Proposed safety filter policy architecture, 
    which solves a QP problem in form \eqref{eq:parameterized_qp} via an unrolled QP solver network, 
    whose structure mimics $n_{\text{iter}}$ PDHG iterations to approximate QP solutions.
    The learnable parameters are $H, W_b, b_b$.}
    \label{fig:network_architecture}
   \end{center}
\end{figure}

\subsection{Reinforcement Learning Setup}
\label{subsec:rl_setup}
The reinforcement learning (RL) framework is defined by five key components: 
agent, environment, state, action, and reward \citep{sutton2018reinforcement}. 
In our context, the \textit{agent} represents the learned safety filter, 
the \textit{environment} encompasses the system dynamics (excluding the filter) and an arbitrary external controller, 
the \textit{state} consists of the system state $x$ and reference control input $\hat{u}$, 
and the \textit{action} is the filter's output $u_0$. 
The \textit{reward} function is designed to guide the agent toward effective safety filtering, 
balancing safety enforcement and minimal intervention.

We employ Proximal Policy Optimization (PPO) \citep{Schulman2017} for training, 
with a reward function structured as: 
\begin{equation}
r_t = r_{\text{safety}} + r_{\text{deviation}} + r_{\text{min\_interv}} + r_{\text{survival}} + r_{\text{termination}}
\end{equation}
The components are designed as follows:

\textit{1. Safety Violation Penalty:} 
A fixed penalty $-k_1$ is applied when state or input constraints are violated, 
regardless of magnitude. The violation indicator $\phi_t^{\text{viol}}$ is defined as:
\begin{equation}
  \phi_t^{\text{viol}} = \begin{cases} 
  1 & \begin{aligned} &\text{if\,\,\,\,} \exists i \text{ such that } (S_X x_t)_{[i]} < (d_X)_{[i]} \\ 
  & \text{or } \exists j \text{ such that } (S_U u_t)_{[j]} < (d_U)_{[j]} \end{aligned} \\
  0 & \text{otherwise}
  \end{cases}
\end{equation}
Thus, $r_{\text{safety}} = -k_1 \phi_t^{\text{viol}}$ with $k_1 > 0$.

\textit{2. Control Deviation Penalty:} 
To minimize unnecessary intervention, we penalize the squared deviation from the reference input: 
$r_{\text{deviation}} = -k_2 \| \hat{u} - u_0 \|^2$, where $k_2 > 0$ tunes the trade-off between safety and fidelity.

\textit{3. Minimal-Intervention Reward:}
To encourage zero intervention when safe, 
a positive reward $k_3$ is granted if the control modification is below a threshold 
$\delta_{\min} > 0$: $r_{\text{min\_interv}} = k_3 \cdot \mathbb{I}(\| \hat{u} - u_0 \| < \delta_{\min})$, 
where $\mathbb{I}(\cdot)$ is the indicator function and $k_3 > 0$.

\textit{4. Auxiliary Rewards:}
To mitigate reward sparsity issues and prevent the agent from prematurely terminating episodes 
to avoid future penalties ``suicide'' behaviors), 
we include two supplementary rewards:  \\
- \textit{Time-step survival bonus:} 
$r_{\text{survival}} = k_4$ (awarded at every time step for sustained operation) \\
- \textit{Early termination penalty:} 
$r_{\text{termination}} = -k_5$ (applied only when episode terminates due to unbearable constraint violation)

This reward structure provides dense learning signals and promotes long-term safety and performance.

\section{Certificate for Persistent Safety}
\label{sec:theoretical_analysis}
While our proposed safety filter is data-driven and model-free, 
its underlying QP structure allows for formal verification of its safety properties post-training. 
In this section, we provide a formal certificate for the \textit{persistent safety} of our learned safety filter. 
Persistent safety ensures that if the system starts in a safe state $x_0 \in \mathcal{X}_0$, 
the filter will keep the next state inside the safety bounds at every step. 
Consequently, it can always produce an action that keeps the system 
within the prescribed safety constraints at all subsequent steps.

Let the safety filter policy be denoted by $\pi_\theta(x_0, \hat{u})$, 
where $\theta = \{H, W_b, b_b\}$ represents the learned parameters. 
The policy outputs a control action $u_0 = y^*_{[1:m_{\text{sys}}]}$. 
Throughout this section, we assume that the optimal solution 
of the learned QP problem is attained, 
which can be ensured by allowing the QP solver to 
run sufficient iterations until convergence when deploying.
Thus, $y^*$ is exactly the optimal solution to the QP problem:
\begin{equation}
\label{eq:learned_qp}
y^* \in \arg\min_{y} \left\{ \tfrac{1}{2}y^\top P y + q^\top y \mid Hy + b(x_0) \ge 0 \right\},
\end{equation}
with $q = -E_1^\top \hat{u}$ and $b(x_0) = W_b x_0 + b_b$.
Additionally, without loss of generality, we assume the safe set $\mathcal{X}_0$
can be expressed as a polyhedral form: $\mathcal{X}_0 = \{x \mid Gx \le c\}$.

To verify the persistent safety of proposed safety filter, 
a sufficient certificate is provided as follows:

\begin{theorem}[Certificate for Persistent Safety]
\label{thm:persistent_safety}
The learned safety filter policy \eqref{eq:learned_qp} is persistently safe 
for all initial states $x_0$ within a safe set $\mathcal{X}_0 = \{x \mid Gx \le c\}$ 
and for any given reference control $\hat{u}$, if the optimal value $z^\star$ of 
the following nonconvex Quadratically Constrained Quadratic Program (QCQP) is nonnegative:
\begin{subequations}
\label{eq:qcqp_safety_certificate}
\begin{align}
\min_{x_0, \hat{u}, v, y, \mu} \quad & - v^\top(G(Ax_0 + B y_{[1:m_{\text{sys}}]}) - c) \\
\text{s.t.} \quad & Gx_0 \le c, \label{eq:qcqp_initial_safe} \\
& v \ge 0, \quad \mathbf{1}^\top v = 1, \label{eq:qcqp_v_constraints} \\
& Py - E_1^\top \hat{u} - H^\top \mu = 0, \label{eq:qcqp_kkt_stationarity} \\
& Hy + W_b x_0 + b_b \ge 0, \label{eq:qcqp_kkt_primal_feas} \\
& \mu \ge 0, \label{eq:qcqp_kkt_dual_feas} \\
& \mu^\top (Hy + W_b x_0 + b_b) = 0. \label{eq:qcqp_kkt_complementary}
\end{align}
\end{subequations}
\end{theorem}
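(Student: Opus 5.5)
The proof is a one-step invariance argument by induction over time, with the optimal-solution map of the learned QP \eqref{eq:learned_qp} replaced by its KKT conditions. The set to be shown invariant is $\mathcal{X}_0$: if $x_0\in\mathcal{X}_0$ and $u_0=\pi_\theta(x_0,\hat u)$ for an arbitrary $\hat u$, then $x_1 = Ax_0+Bu_0\in\mathcal{X}_0$. Persistent safety then follows by induction over time steps, because the next state again satisfies $Gx_1\le c$ and the argument can be reapplied with $x_1$ as the new initial state. This assumes $\mathcal{X}_0$ lies within the state constraint polytope, so that remaining in $\mathcal{X}_0$ means remaining safe; this is how I read the theorem's use of ``safe set''.

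\textbf{Step 1: KKT reformulation.} Since $P\in\mathbb{S}_{++}$ and the constraints of \eqref{eq:learned_qp} are affine, the KKT conditions are necessary and sufficient for optimality whenever the QP is feasible. Linear constraints give Abadie's constraint qualification, so no Slater condition is needed. For a given $(x_0,\hat u)$, a vector $y$ is the (unique) optimizer $y^*$ if and only if some $\mu$ exists satisfying \eqref{eq:qcqp_kkt_stationarity}--\eqref{eq:qcqp_kkt_complementary}. Here \eqref{eq:qcqp_kkt_stationarity} is exactly $\nabla(\tfrac12 y^\top Py+q^\top y)-H^\top\mu=0$ with $q=-E_1^\top\hat u$. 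Hence every reachable closed-loop pair $(x_0, u_0)$, with $x_0\in\mathcal{X}_0$, corresponds to some feasible point $(x_0,\hat u,y,\mu)$ of the QCQP with $u_0=y_{[1:m_{\text{sys}}]}$. Infeasible QPs produce no action, so presumably the theorem implicitly assumes feasibility, or such cases are excluded by the policy definition.

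\textbf{Step 2: the scalarization by $v$.} Write $s = G(Ax_0+By_{[1:m_{\text{sys}}]})-c$. The condition $x_1\in\mathcal{X}_0$ is equivalent to $\max_i s_i\le 0$. Over the simplex $\{v\ge0,\ \mathbf{1}^\top v=1\}$, the linear function $v^\top s$ is maximized at a vertex, so $\max_v v^\top s = \max_i s_i$. The QCQP objective is $-v^\top s$, and its minimum over $v$ equals $-\max_i s_i$. Minimizing jointly over all variables gives
\[
z^\star = -\sup\Big\{\max_i \big(G(Ax_0+Bu_0)-c\big)_i \Big\},
\]
where the supremum runs over all $x_0\in\mathcal{X}_0$, all $\hat u$, and all resulting KKT points. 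If $z^\star\ge 0$, every reachable next state satisfies $Gx_1-c\le 0$, that is, $x_1\in\mathcal{X}_0$. The bilinear terms $v^\top G B y$, $v^\top GAx_0$ and the complementarity constraint are what make the program a nonconvex QCQP; they do not affect the argument.

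\textbf{Step 3 and main obstacle.} The remaining work is bookkeeping: invoke Step 2 at each time step and conclude by induction. I expect the only delicate point to be Step 1. The equivalence between the KKT set and the policy's output must be exact, which requires strict convexity for uniqueness of $y^*$ and an explicit statement of the feasibility assumption. I also need to check that the infimum (possibly unattained, since $\hat u$ is unbounded) still yields the pointwise inequality. It does, because $z^\star\ge0$ bounds every feasible objective value from below regardless of attainment.
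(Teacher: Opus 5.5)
Your proposal is correct and follows the paper's proof essentially step for step: you reduce persistent safety to one-step invariance of $\mathcal{X}_0$, scalarize the row-wise maximum with the simplex variable $v$, and replace the inner QP by its KKT conditions; the only cosmetic difference is that you do the KKT substitution first. Your version is slightly more careful than the paper's in three respects: you justify the $v$-step by the vertex argument rather than by appealing to LP strong duality, you state the feasibility and strict-convexity assumptions explicitly, and you note that only one direction is needed, namely that every policy output is a KKT point.
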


\begin{pf}
The safety filter is persistently safe if, 
for any initial state $x_0 \in \mathcal{X}_0$, 
the resulting next state $x_1 = Ax_0 + B u_0$ also remains in the safe set, 
i.e., $x_1 \in \mathcal{X}_0$. This requires the condition 
$G(Ax_0 + B\pi_\theta(x_0, \hat{u})) \le c$ to hold for all $x_0$ satisfying $Gx_0 \le c$.

This universal condition is equivalent to stating that 
the maximum possible violation is non-positive:
\begin{equation}
\max_{x_0: Gx_0 \le c} \left[ \max_{i} \left( G_i(Ax_0 + B\pi_\theta(x_0, \hat{u})) - c_i \right) \right] \le 0,
\end{equation}
where $G_i$ and $c_i$ are the $i$-th rows of $G$ and $c$, respectively.

By introducing an auxiliary vector $v \ge 0$ with $\mathbf{1}^\top v = 1$, 
we can express the inner maximum as a linear program. 
Leveraging the strong duality of linear programming, the condition becomes:
\begin{equation}
\max_{x_0: Gx_0 \le c} \left[ \max_{v \ge 0, \mathbf{1}^\top v = 1} v^\top(G(Ax_0 + B\pi_\theta(x_0, \hat{u})) - c) \right] \le 0.
\end{equation}
Combining two maximization and flipping it to a minimization, 
we obtain the equivalent condition that the optimal value of 
the following problem must be non-negative:
\begin{equation}
\min_{x_0, \hat{u}, v} - v^\top(G(Ax_0 + B\pi_\theta(x_0, \hat{u})) - c) \ge 0,
\end{equation}
subject to the constraints in \eqref{eq:qcqp_initial_safe} and \eqref{eq:qcqp_v_constraints}.

The expression $\pi_\theta(x_0, \hat{u})$ represents 
the inner-level optimization problem from \eqref{eq:learned_qp}. 
This creates a bilevel optimization problem, 
which is notoriously difficult to solve. 
The key step is to replace the inner optimization problem 
with its Karush-Kuhn-Tucker (KKT) conditions, 
which are necessary and sufficient for optimality for a convex QP. 
The KKT conditions for the QP in \eqref{eq:learned_qp} 
are precisely the constraints \eqref{eq:qcqp_kkt_stationarity}-\eqref{eq:qcqp_kkt_complementary}, 
where $\mu$ is the vector of Lagrange multipliers.
By performing this substitution, we transform the bilevel problem 
into the single-level, albeit nonconvex, QCQP as stated in the theorem. 

\end{pf}

\textit{Numerical verification via SOS/SDP:} 
The problem \eqref{eq:qcqp_safety_certificate} is a polynomial (at most quadratic) 
optimization in the variables $(x_0,\hat{u},v,y,\mu)$ with complementarity conditions
\eqref{eq:qcqp_kkt_complementary} producing bilinear terms. 
Globally solving this nonconvex QCQP is, in general, hard. 
A practical and certifiable approach is to apply Sum-of-Squares (SOS) programming \citep{Parrilo2003} 
to relax this nonconvex QCQP problem to a Semi-definite Programming (SDP) problem, 
and solve the SDP problem to obtain a lower bound $z_{\mathrm{SDP}}\le z^\star$ \citep{Lasserre2001}.
If the computed lower bound satisfies $z_{\mathrm{SDP}}\ge 0$, 
then $z^\star\ge 0$ and Theorem \ref{thm:persistent_safety} yields the certificate, 
i.e., the learned safety filter is persistently safe for all states within $\mathcal{X}_0$ 
and for any reference control $\hat{u}$.

\section{Numerical Results}
\begin{table*}[ht]
  \centering
  \caption{Performance comparison on cartpole system.\protect\footnotemark}
  \label{tab:performance_table_cartpole}
  \resizebox{\textwidth}{!}{%
  \begin{tabular}{>{\centering\arraybackslash}m{2.8cm}|cccccccc|cccccccc}
    \toprule
     & \multicolumn{8}{c|}{Stabilization Tasks} & \multicolumn{8}{c}{Tracking Tasks} \\ 
    \cmidrule(lr){2-9} \cmidrule(lr){10-17} 
    Method & \multicolumn{2}{c}{$n=0.0$} & \multicolumn{2}{c}{$n=1.0$} & \multicolumn{2}{c}{$n=5.0$} & \multicolumn{2}{c|}{$n=8.0$} & \multicolumn{2}{c}{$n=0.0$} & \multicolumn{2}{c}{$n=1.0$} & \multicolumn{2}{c}{$n=5.0$} & \multicolumn{2}{c}{$n=8.0$} \\ 
    \cmidrule(lr){2-3} \cmidrule(lr){4-5} \cmidrule(lr){6-7} \cmidrule(lr){8-9} 
    \cmidrule(lr){10-11} \cmidrule(lr){12-13} \cmidrule(lr){14-15} \cmidrule(lr){16-17}
     & Vio\% & Dev & Vio\% & Dev & Vio\% & Dev & Vio\% & Dev & Vio\% & Dev & Vio\% & Dev & Vio\% & Dev & Vio\% & Dev \\ 
    \midrule
    Unfiltered     & 0.00 & - & 0.00 & - & 78.67 & - & 85.31 & - & 0.00 & - & 0.00 & - & 23.31 & - & 97.33 & - \\
    \cmidrule(lr){1-17}
    Baseline PSF   & 0.00 & $\bm{6.89 \times 10^{-5}}$   & 0.00 & $\bm{1.90 \times 10^{-3}}$ & 0.00 & \underline{174.76} & fail & -           & 0.00 & $\bm{8.13 \times 10^{-5}}$ & 0.00 & $\bm{8.53 \times 10^{-4}}$ & 68.08 & 693.18 & fail & - \\
    RL+MLP         & 0.00 & 1.27                          & 0.00 & 10.51 & 7.37 & 258.41              & 9.56 & 406.55     & 0.00 & 30.21                  & 0.00 & 35.81          & 0.00 & \underline{1220.42} & 0.67 & \underline{2000.97} \\
    RL+LQP         & 0.00 & \underline{$7.46 \times 10^{-2}$} & 0.00 & \underline{0.15}             & 0.00 & \textbf{164.10} & 0.00 & \textbf{402.90} & 0.00 & \underline{$6.51 \times 10^{-3}$} & 0.00 & \underline{$3.00 \times 10^{-2}$} & 0.00 & \textbf{189.25} & 0.00 & \textbf{767.45} \\ 
    \bottomrule
  \end{tabular}%
  }
\end{table*}

\begin{figure*}[h]
   \centering
   \includegraphics[width=16.4cm]{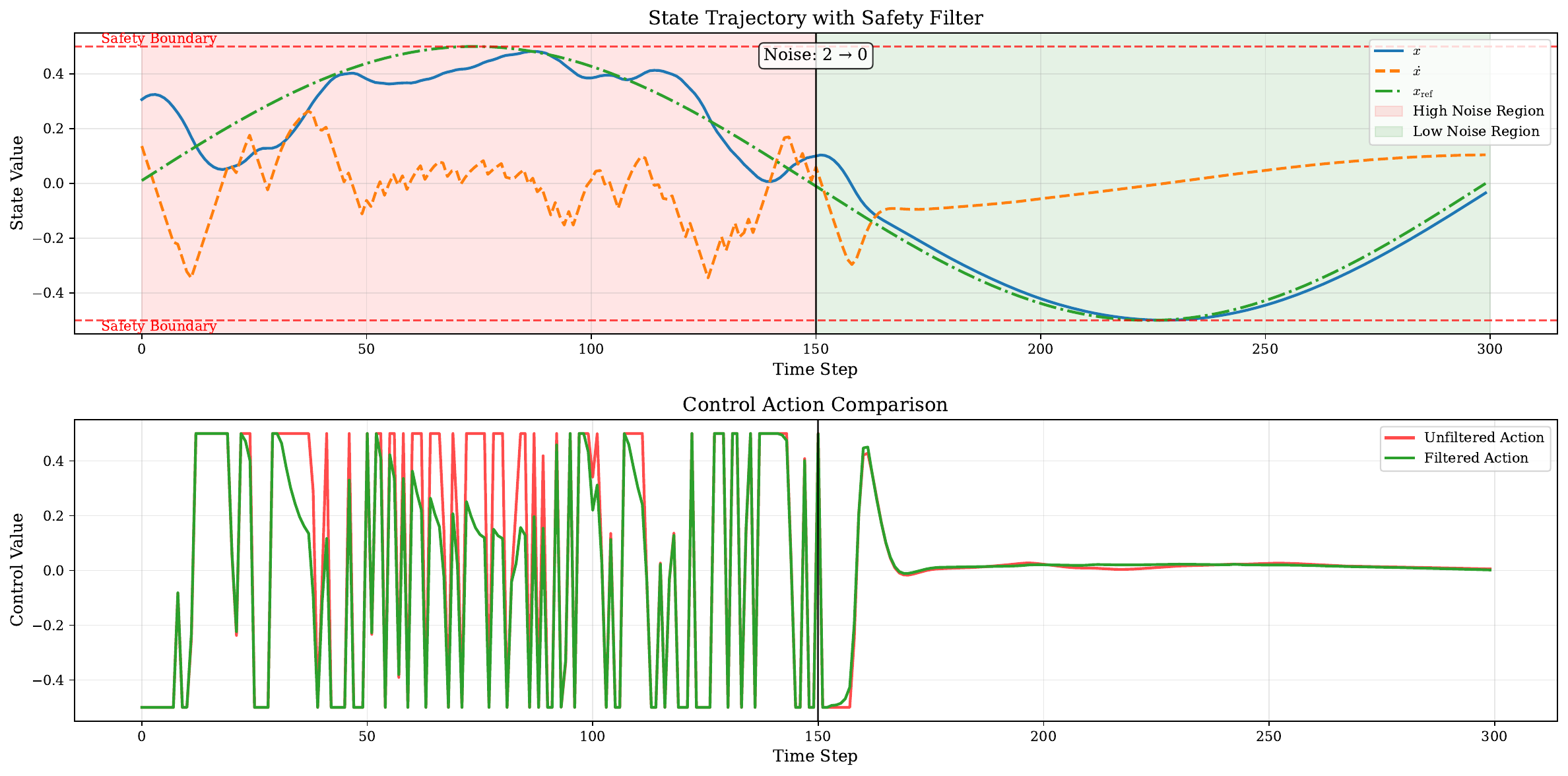}
   \caption{Performance of the proposed RL+LQP safety filter on a double integrator tracking task 
   under high-noise ($n=2$, red background) and low-noise ($n=0$, green background) regimes. 
   Top: state trajectories remain within the safety bounds (red dashed lines) in both regimes. 
   Bottom: corresponding control actions show increased intervention in high-noise regions and near-perfect tracking in low-noise regions.}
   \label{fig:track_traj}
\end{figure*}

This section presents experimental validation of the proposed model-free safety filter 
through simulations across three benchmark systems: 
double integrator, quadruple tank \citep{Johansson2000}, and cartpole \citep{Geva1993}. 
Experiments evaluate performance on both \textit{stabilization tasks} (fixed setpoints) and 
\textit{tracking tasks} (time-varying references). 
Comparative analysis includes:
\begin{itemize}
    \item \textit{Unfiltered Control}: apply reference input $\hat{u}$ directly.
    \item \textit{Baseline PSF}: original model-based predictive safety filter \citep{Wabersich2021}.
    \item \textit{RL+MLP}: an RL-trained MLP safety filter.
    \item \textit{RL+LQP (Proposed)}: our RL-trained learnable QP safety filter.
\end{itemize}

Experimental setup: the candidate control $\hat{u}$ is generated by Linear Quadratic Regulator (LQR) controller
with additive Gaussian noise of different mean values $n$. 
Each configuration was evaluated over 100 independent episodes. 
Evaluation metrics include \textit{Violation Rate (Vio\%)} (The percentage of steps 
violating safety constraints out of the total number of control steps) and 
\textit{Cumulative Deviation} ($Dev = \sum \| \hat{u} - u_0 \|^2 $).
Additionally, all given LQP results is assessed in appropriate configurations: $n_{\text{qp}}=4, m_{\text{qp}}=30$.
And the detailed models and configurations of each system are provided in Appendix \ref{sec:appendix}.

Table \ref{tab:performance_table_cartpole} provides a comprehensive comparison of safety filter performance 
on the cartpole system under varying noise conditions. The evaluation covers both stabilization 
and tracking tasks. 
For intuitive demonstration, best is highlighted in bold, while second best is underlined.

\subsection{Stabilization Tasks}
For stabilization tasks, according to the left half of Table \ref{tab:performance_table_cartpole},
when the LQR control noise is absent ($n=0.0$), the system can operate without filtering 
and still comply with safety constraints.
However, as noise $n$ increases, unfiltered control inputs will result in 
increasingly severe violations of safety constraints. 
While after filtering with our proposed RL+LQP method, 
empirically 100\% safety guarantee can be achieved for any given noise level.

For example, when the noise level $n$ increased to 8.0, the model-based PSF baseline failed
due to the unsolvable error of the OSQP solver, thus losing its filtering ability.
Meanwhile, though RL+MLP method has reduced Vio\% to some extent, 
there are still 9.56\% of time steps that are unsafe.
At this point, only our proposed RL+LQP method 
did not exhibit any observed violations across all tested conditions.

\subsection{Tracking Tasks}
For tracking tasks, the reference control $\hat{u}$ is set as a sinusoidal trajectory, 
whose amplitude is equal to the safety boundary. 
Consequently, the closer the trajectory is to its peak, the higher the safety risk.

The right half of Table \ref{tab:performance_table_cartpole} compares tracking performance for the cartpole system.
Our RL+LQP method consistently outperformed baselines in safety-critical scenarios 
with superior safety guarantees while maintaining lower deviation.

Figure \ref{fig:track_traj} illustrates the key capabilities of our proposed safety filter in double integrator system. 
The state trajectories (top) show that the filter successfully maintains all states 
within the safety boundaries (red dashed lines) across both high-noise ($n=2$, red background) and low-noise ($n=0$, green background) regimes.

The control actions (bottom) reveal its intelligent intervention strategy: 
\textit{In high-noise regions}, the filter intervenes substantially (green vs. red line), 
particularly as the state $x$ approaches the constraint limits. 
Notably, it demonstrates a \textit{proactive} safety-assurance behavior, 
initiating corrective actions \textit{before} violations occur. 
\textit{In low-noise regions}, where the reference input is inherently safe, 
the filter tracks with near-perfect fidelity and minimal intervention, 
prioritizing control accuracy when safety is not compromised.

This behavior, characterized by proactive intervention only when safety is at risk 
and high-fidelity tracking otherwise, 
demonstrates that our learned filter effectively balances safety assurance with minimal modification.

The experimental results on other systems are similar.
Due to space constraints, we only present a subset of the results in the main paper. 
The complete results, including statistical tables of other systems (double integrator and quadruple tank), 
and detailed system configurations, are provided in Appendix \ref{sec:appendix}.

\textit{Summary of Properties:} As shown in the experimental results, in both stabilization and tracking tasks, 
the proposed RL+LQP safety filter demonstrates three key advantages over existing approaches:

\begin{enumerate}
    \item \textit{Enhanced Safety Guarantees}: Our approach outperforms both baselines with great safety performance, 
    achieving empirically 0\% violation rate even in challenging high-noise conditions where the conventional model-based PSF fails to find feasible solutions. 
    This comparison arises because conventional solvers, operating with a limited prediction horizon, prioritize short-term constraint satisfaction and minimal intervention, 
    resulting in \textit{myopic} behavior that may progressively drive the system into unsalvageable dangerous states.

    \item \textit{Minimal Intervention}: 
    When the noise level is low, both our method and the conventional PSF exhibit negligible intervention, 
    while under high noise, our method excels. 
    This also stems from the far-sighted perspective of our learning-based method, 
    which optimizes the safety filter's policy over entire episodes rather than a finite horizon,  
    achieving a balance between immediate safety and minimal long-term intervention.
    Moreover, across all tested noise conditions, our approach achieves much lower control deviation than the RL+MLP baseline, 
    demonstrating the efficacy of the embedded QP structure.

    \item \textit{Lower Computational Load}: As quantified in Table \ref{tab:computational_efficiency}, 
    considering FLOPs (floating point operations per control step), 
    our RL+LQP approach requires substantially fewer online computational resources than alternatives.
\end{enumerate}
\begin{table}[h]
    \centering
    \caption{\protect\mbox{Computational load comparison (FLOPs).}}
    \label{tab:computational_efficiency}
    \resizebox{8.9cm}{!}{
    \begin{tabular}{c|ccc}
    \toprule
    \diagbox{Method}{System} & Double Integrator & Cartpole & Tank \\
    \midrule
    Baseline PSF & $1.04 \times 10^{6}$ & $2.17 \times 10^{6}$ & $2.57 \times 10^{6}$ \\
    RL+MLP & $8.45 \times 10^{4}$ & $8.66 \times 10^{4}$ & $8.67 \times 10^{4}$ \\
    RL+LQP & $\bm{2.12 \times 10^{4}}$ & $\bm{5.99 \times 10^{4}}$ & $\bm{2.15 \times 10^{4}}$ \\
    \bottomrule
    \end{tabular}
    }
 \end{table}

Such advantages originate from the unrolled QP network's capability 
to encode temporal safety constraints in a model-free manner, 
enabling a compromise between safety assurance and computational practicality.

\footnotetext{Entries labeled ``fail'' denote cases where the OSQP solver returned infeasible.}

\section{Conclusion}
\label{sec:conclusion}
This paper proposed a model-free predictive safety filter 
that parameterizes the safety-check QP as an unrolled solver 
and learns its parameters via reinforcement learning. 
This method preserves the interpretable QP structure, which enables formal safety verification, 
while removing reliance on explicit system models. 
We also provided a sufficient certificate for its persistent safety, 
bridging the gap between learning-based methods and formal guarantees. 
Experiments on benchmark tasks demonstrate 
strong empirical safety guarantee and reduced intervention compared to baselines, 
and substantially lower per-step computation than conventional model-based PSFs. 

Future work will focus on extending the safety certificate to handle system uncertainties 
and practical deployment on real-world platforms. 
Code is available at https://github.com/bbihui/LQP-PSF. 
The open-source implementation will facilitate further research 
in learning-based safety-critical control. 
Subsequent investigations could incorporate robust control formulations 
to enhance performance under unmodeled dynamics and extreme operating conditions. 

\begin{table*}[htbp]
  \centering
  \caption{Task definitions for benchmarking.}
  \label{tab:task_definitions}
  \begin{tabular}{l l l l}
    \toprule
    & \text{Double Integrator} & \text{Quadruple Tank} & \text{Cartpole Balance} \\
    \midrule
    \text{Source} & 
    \makecell[l]{\cite{Borrelli2017}, \\ p.246, Example 12.1} & 
    \makecell[l]{\cite{Johansson2000} \\ (original source); \\ \cite{Zishuo2023} \\ (linearization)} & 
    \makecell[l]{\cite{Geva1993}} \\
    \midrule
    \text{Nominal dynamics} & 
    $\begin{array}{l} A = \begin{bmatrix} 1 & 1 \\ 0 & 1 \end{bmatrix}, \\ B = \begin{bmatrix} 0 \\ 1 \end{bmatrix} \end{array}$ &
    $\begin{array}{l} A = \begin{bmatrix} 0.98 & 0 & 0.04 & 0 \\ 0 & 0.99 & 0 & 0.03 \\ 0 & 0 & 0.96 & 0 \\ 0 & 0 & 0 & 0.97 \end{bmatrix}, \\ B = \begin{bmatrix} 0.83 & 0 \\ 0 & 0.62 \\ 0 & 0.47 \\ 0.3 & 0 \end{bmatrix} \end{array}$ &
    $\begin{array}{@{}l@{}}
    \begin{bmatrix}
        m_c + m_p & m_p l \cos\theta \\
        m_p l \cos\theta & m_p l^2
    \end{bmatrix}
    \begin{bmatrix}
        \ddot{p}_x \\ \ddot{\theta}
    \end{bmatrix} 
    = 
    \begin{bmatrix}
        u + m_p l \sin\theta \dot{\theta}^2 \\
        m_p g l \sin\theta
    \end{bmatrix}, \\
    \text{where state variable is } x = [p_x, \dot{p}_x, \theta, \dot{\theta}]^\top, \\
    \text{and nominal parameters are } m_c=1,
    m_p=0.1, l=0.55. \\
    \text{Discretized with time step 0.1s.} \\
    \text{Simulation uses nonlinear dynamics, } \\
    \text{while baseline PSF uses linearized dynamics.}
    \end{array}$ \\
    \midrule
    \text{Safety Constraints} & 
    $\begin{array}{l} -0.5 \le x \le 0.5, \\ -0.5 \le u \le 0.5 \end{array}$ & 
    $\begin{array}{l} 0 \le x \le 20, \\ -1 \le u \le 1 \end{array}$ & 
    $\begin{array}{l} -2 \le p_x \le 2, \\ -0.5 \le \theta \le 0.5, \\ -10 \le u \le 10 \end{array}$ \\
    \midrule
    \text{Episode length} &
    100 &
    100 &
    300 \\
    \bottomrule
  \end{tabular}
\end{table*}

\begin{table*}[ht] 
\centering
\caption{Performance comparison on double integrator system.} 
\label{tab:performance_table_double_integrator}
\resizebox{\textwidth}{!}{%
\begin{tabular}{c|cccccccc|cccccccc}
\toprule 
& \multicolumn{8}{c|}{Stabilization Tasks} & \multicolumn{8}{c}{Tracking Tasks} \\ 
\cmidrule(lr){2-9} \cmidrule(lr){10-17} 
Method & \multicolumn{2}{c}{$n=0.0$} & \multicolumn{2}{c}{$n=0.5$} & \multicolumn{2}{c}{$n=1.0$} & \multicolumn{2}{c|}{$n=2.0$} & \multicolumn{2}{c}{$n=0.0$} & \multicolumn{2}{c}{$n=0.5$} & \multicolumn{2}{c}{$n=1.0$} & \multicolumn{2}{c}{$n=2.0$} \\ 
\cmidrule(lr){2-3} \cmidrule(lr){4-5} \cmidrule(lr){6-7} \cmidrule(lr){8-9} 
\cmidrule(lr){10-11} \cmidrule(lr){12-13} \cmidrule(lr){14-15} \cmidrule(lr){16-17}
     & Vio\% & Dev & Vio\% & Dev & Vio\% & Dev & Vio\% & Dev & Vio\% & Dev & Vio\% & Dev & Vio\% & Dev & Vio\% & Dev \\ 
\midrule 
Unfiltered  & 0.00 & - & 8.67 & - & 36.12 & - & 81.34 & - & 0.00 & - & 0.00 & - & 12.67 & - & 16.54 & - \\
Baseline PSF    & 0.00 & $1.16 \times 10^{-7}$ & 0.00 & $2.79 \times 10^{-7}$ & 0.00 & 4.67 & fail & - & 0.00 & $1.41 \times 10^{-2}$ & 0.00 & $5.87 \times 10^{-2}$ & 0.00 & 2.09 & fail & - \\
RL+MLP      & 0.00 & 0.23 & 0.00 & 0.63 & 6.33 & 2.32 & 17.67 & 2.88 & 0.00 & 0.11 & 0.00 & 0.24 & 0.00 & 7.41 & 0.00 & 9.12 \\
RL+LQP         & 0.00 & 0.49 & 0.00 & 0.68 & 0.00 & 2.73 & 0.00 & 4.50 & 0.00 & 0.13 & 0.00 & 0.89 & 0.00 & 5.05 & 0.00 & 5.03 \\ 
\bottomrule 
\end{tabular}%
}
\end{table*}

\begin{table*}[ht]
  \centering
  \caption{Performance comparison on quadruple tank system in stabilization tasks.}
  \label{tab:performance_table_tank}
  \resizebox{\textwidth}{!}{%
  \begin{tabular}{c|cccccccc|cccc}
    \toprule
     & \multicolumn{8}{c|}{LQR control with Gaussian noise} & \multicolumn{4}{c}{Bang-bang control} \\ 
    \cmidrule(lr){2-9} \cmidrule(lr){10-13} 
    Method & \multicolumn{2}{c}{$n=0.0$} & \multicolumn{2}{c}{$n=0.2$} & \multicolumn{2}{c}{$n=1.0$} & \multicolumn{2}{c|}{$n=5.0$} & \multicolumn{2}{c}{$(0.3, 30)$} & \multicolumn{2}{c}{$(1, 50)$} \\ 
    \cmidrule(lr){2-3} \cmidrule(lr){4-5} \cmidrule(lr){6-7} \cmidrule(lr){8-9} 
    \cmidrule(lr){10-11} \cmidrule(lr){12-13} 
     & Vio\% & Dev & Vio\% & Dev & Vio\% & Dev & Vio\% & Dev & Vio\% & Dev & Vio\% & Dev \\ 
    \midrule
    Unfiltered  & 0.00 & - & 2.35 & - & 41.02 & - & 49.14 & - & 0.00 & - & 93.63 & -  \\
    Baseline PSF   & 0.00 & $64.63 \times 10^{-19}$ & 0.00 & $2.26 \times 10^{-19}$ & 0.00 & 3.96 & 0.00 & 9.26 & 0.00 & $1.96 \times 10^{-29}$ & 0.00 & 62.01 \\
    RL+MLP      & 0.00 & 3.22 & 0.00 & 5.38 & 0.00 & 38.39 & 0.00 & 54.85 & 0.00 & 3.53 & 0.00 & 55.62 \\
    RL+LQP      & 0.00 & $1.83 \times 10^{-2}$ & 0.00 & $2.89 \times 10^{-2}$ & 0.00 & 15.47 & 0.00 & 25.27 & 0.00 & 0.38 & 0.00 & 60.73 \\ 
    \bottomrule
  \end{tabular}%
  }
\end{table*}

\bibliography{ifacconf}             

\appendix
\section{benchmarking setup and additional experiment results}    
\label{sec:appendix}
In this appendix, we provide the detailed system configurations and 
additional numerical results on double integrator and quadruple tank systems.
The task definitions for the three benchmark systems are summarized in Table \ref{tab:task_definitions}.
Table \ref{tab:performance_table_double_integrator} and \ref{tab:performance_table_tank} 
summarize key results in double integrator system and quadruple tank, respectively.

Notably, for the quadruple tank system, we only conducted stabilization tasks.
But in addition to the LQR controller with Gaussian noise, 
an open-loop bang-bang control reference $\hat{u}$ was also applied for evaluation. 
This controller switches between a non-zero control input and zero, defined as:
\begin{equation}
\label{eq:bang_bang_control}
\hat{u}(t) = 
\begin{cases} 
u & \text{if } t < t_{\text{switch}} \\
0 & \text{if } t \ge t_{\text{switch}}
\end{cases}
\end{equation}
In Table \ref{tab:performance_table_tank}, this control strategy is denoted by the tuple $(u, t_{\text{switch}})$, 
representing the control magnitude and the switching time step.
                                                     
\end{document}